  \newtheorem{theorem}{Theorem}
  \newtheorem{corollary}{Corollary}
  \newtheorem{lemma}{Lemma}
\begin{document}

\title{Integers with large practical component}
\author{Andreas Weingartner}
\address{ 
Department of Mathematics,
351 West University Boulevard,
 Southern Utah University,
Cedar City, Utah 84720, USA}
\email{weingartner@suu.edu}

\subjclass[2010]{11N25, 11N37}

\begin{abstract}
A positive integer $n$ is called practical if all integers between $1$ and $n$ can be written as a sum of distinct divisors of $n$.
We give an asymptotic estimate for the number of integers $\le x$ which have a practical divisor $\ge y$.
\end{abstract} 
\maketitle

\section{Introduction}

A positive integer $n$ is called \emph{practical} if all integers between $1$ and $n$ can be written as a sum of distinct divisors of $n$.
In 1948, Srinivasan \cite{Sri} began the study of practical numbers, which have been the source of a fair amount of research activity ever since.  
Let $P(x)$ denote the number of practical numbers $\le x$. 
Increasingly precise estimates for $P(x)$ have been obtained by Erd\H{o}s and Loxton \cite{EL},
Hausman and Shapiro \cite{HS}, Margenstern \cite{Mar}, Tenenbaum \cite{Ten86} and Saias \cite{Sai}, 
who found that the order of magnitude of $P(x)$ is $x/\log x$. 
In \cite{PDD} we showed that there is a positive constant $c$ such that
\begin{equation}\label{P}
P(x)=\frac{cx}{\log x} \left(1+O\left(\frac{\log \log x}{\log x}\right) \right),
\end{equation}
confirming a conjecture by Margenstern \cite{Mar}.  In this note we want to generalize \eqref{P} to integers
which have a large practical divisor.

Let $g(n)$ denote the \emph{practical component} of $n$, i.e. the largest divisor of $n$ which is practical. 
We have $g(n)=n$ if and only if $n$ is practical, hence we can think of $g(n)$ as a measure for 
how close $n$ is to being practical. 
Let $M(x,y)$ be the number of integers $\le x$ whose practical component is at least $y$, i.e.
$$ M(x,y):=\#\{n\le x: g(n) \ge y \}.$$

A closely related arithmetic function is $f(n)$, the largest integer with the property that all integers in the interval 
$[1,  f(n)]$ can be written as a sum of distinct divisors of $n$. Clearly, $n$ is practical if and only if $f(n)\ge n$. 
Thus $f(n)$ represents another measure for how close $n$ is to being practical. Pollack and Thompson \cite{PT} call an integer $n$  
a \emph{practical pretender} (or a \emph{near-practical number}) if $f(n)$ is large. More precisely, they define
$$N(x,y):=\#\{n\le x: f(n) \ge y \}$$
and show that there are two positive constants $c_1, c_2$ such that
$$  c_1 \frac{x}{\log y} \le N(x,y) \le c_2 \frac{x}{\log y} \quad (4\le y \le x).$$ 
In \cite[Lemma 2.1]{PT}, they find that $f(n)$ satisfies $f(n)=\sigma(g(n))$, where $\sigma(m)$ denotes the sum
of the positive divisors of $m$.

To describe the asymptotic behavior of $M(x,y)$ and $N(x,y)$, we need the following notation.
Let $c$ be the positive constant in \eqref{P}, $\chi(n)$ be the characteristic function of the set of practical numbers,
$$ u = \frac{\log x}{\log y},$$
 and
$\omega(u)$ be Buchstab's function, i.e. the 
unique continuous solution to the equation
\begin{equation*}
(u\omega(u))' = \omega(u-1) \qquad (u>2)
\end{equation*}
with initial condition
$\omega(u)=1/u$ for $1\le u \le 2$.
\begin{theorem}\label{thmN}
 For $x\ge y \ge 2$ we have
\begin{enumerate}
\item[(i)]
$\displaystyle
M(x,y) = \frac{c(x \omega(u)-y)}{\log y}
+O\left(\frac{x\log \log 2y}{(\log y)^2}\right),
$
\item[]
\item[(ii)]
$\displaystyle
N(x,y) = \frac{cx \omega(u)}{\log y}
+O\left(\frac{y}{\log y} +\frac{x\log \log 2y}{(\log y)^2}\right),
$
\item[]
\item[(iii)] $\displaystyle M(x,y)= x \mu_y + O(2^y),$
\item[]
\item[(iv)] $\displaystyle N(x,y) = x \nu_y +O(2^y),$
\end{enumerate}
where 
$$  \mu_y:= 1-\sum_{n<y} \frac{\chi(n)}{n} \prod_{p\le \sigma(n)+1} \left(1-\frac{1}{p}\right) $$
and 
$$ \nu_y:= 1-\sum_{\sigma(n)<y} \frac{\chi(n)}{n} \prod_{p\le \sigma(n)+1} \left(1-\frac{1}{p}\right) . $$
\end{theorem}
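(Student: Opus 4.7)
The approach is to reduce everything to a sieving estimate via a structural description of the practical component. Starting from the classical characterization of practical numbers (that $n=p_1^{a_1}\cdots p_k^{a_k}$ with $p_1<\cdots<p_k$ is practical iff $p_{i+1}\le\sigma(p_1^{a_1}\cdots p_i^{a_i})+1$ for all $i$), one verifies that $g(n)=d$ if and only if $n=dm$ where $d$ is practical and every prime factor of $m$ exceeds $\sigma(d)+1$. Indeed, a smaller prime $p\mid m$ would make $dp$ a strictly larger practical divisor of $n$; conversely, any practical divisor of $n$ is shown, via the characterization applied to its own smallest ``new'' prime factor, to divide $d$. Writing $\Phi(t,z):=\#\{m\le t:p\mid m\Rightarrow p>z\}$, this yields
\begin{equation*}
M(x,y)=\sum_{\substack{d\text{ practical}\\ y\le d\le x}}\Phi(x/d,\sigma(d)+1)=x-\sum_{\substack{d\text{ practical}\\ d<y}}\Phi(x/d,\sigma(d)+1),
\end{equation*}
and via Pollack--Thompson's identity $f(n)=\sigma(g(n))$ one obtains the analogue for $N(x,y)$ with $d<y$ replaced by $\sigma(d)<y$.

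\textbf{Parts (iii) and (iv).} Apply the M\"obius sifting bound $\Phi(t,z)=t\prod_{p\le z}(1-1/p)+O(2^{\pi(z)})$ to the \emph{subtracted} forms above. The main terms produce $x\mu_y$ and $x\nu_y$ once the total-mass identity
\begin{equation*}
\sum_{d\text{ practical}}\frac{1}{d}\prod_{p\le\sigma(d)+1}\left(1-\frac{1}{p}\right)=1
\end{equation*}
is established (from the partition of positive integers by their practical component and letting $x\to\infty$). The aggregate error $\sum_{d<y,\,\text{prac.}}2^{\pi(\sigma(d)+1)}$ is crudely $O(2^y)$ using Robin's bound $\sigma(d)\ll d\log\log d$ for practical $d$.

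\textbf{Parts (i) and (ii).} For these, I would employ Tenenbaum's uniform asymptotic
\begin{equation*}
\Phi(t,z)=\frac{t\,\omega(\log t/\log z)}{\log z}-\frac{z}{\log z}+O\!\left(\frac{t}{(\log z)^2}\right)\quad(t\ge z\ge 2),
\end{equation*}
combined with the sharp estimate \eqref{P}. Partition the range of $d$ at $d\sim\sqrt{x}$ into (a) a main range $d\le\sqrt{x}$, where Tenenbaum applies to $\Phi(x/d,\sigma(d)+1)$, and (b) a tail $d>\sqrt{x}$, where $\Phi(x/d,\sigma(d)+1)=1$ because $x/d<\sigma(d)+1$. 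Converting the main contribution by partial summation against $dP(t)\sim(c/\log t)\,dt$, then making the change of variables $t=y^v$ and the approximation $\log(\sigma(t)+1)=\log t+O(\log\log t)$, yields
\begin{equation*}
\frac{cx}{\log y}\int_1^{u/2}\frac{\omega(u/v-1)}{v^2}\,dv=\frac{cx}{\log y}\left(\omega(u)-\frac{1}{u}\right),
\end{equation*}
where the integral is evaluated via the substitution $s=u/v-1$ and the Buchstab identity $u\omega(u)=1+\int_1^{u-1}\omega(s)\,ds$. The tail contributes $P(x)-P(\sqrt{x})$, which combines with the main contribution so that the leading term is $cx\omega(u)/\log y$; the boundary correction $-cy/\log y$ then emerges from the precise partial-summation treatment at the lower endpoint $d=y$, absorbing the aggregated $-z_d/\log z_d$ terms from Tenenbaum's asymptotic. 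Part (ii) is handled identically, save that the modified summation range $\sigma(d)\ge y$ produces an extra contribution from a thin range of $d\in(y/(C\log\log y),y)$, absorbed into the $O(y/\log y)$ error.

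\textbf{Main obstacle.} The hardest step is obtaining the sharp error term $O(x\log\log 2y/(\log y)^2)$ in (i) and (ii). The $\log\log 2y$ factor tracks the worst-case discrepancy $\log(\sigma(d)+1)-\log d=O(\log\log d)$ via Robin's bound, which must be propagated through both \eqref{P} (whose own relative error is of comparable size $\log\log x/\log x$) and the sifting estimate for $\Phi$. Uniformly controlling these combined errors across the full range of $d$, and correctly identifying the boundary term $-cy/\log y$ as arising from the interplay between the Tenenbaum correction $-z_d/\log z_d$ and the partial-summation endpoint at $t=y$, is the principal technical challenge.
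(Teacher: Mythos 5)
Your proposal follows essentially the same route as the paper: the same decomposition by practical component, the Legendre sieve $\Phi(t,z)=t\prod_{p\le z}(1-1/p)+O(2^{\pi(z)})$ for (iii)--(iv), and Tenenbaum's Buchstab asymptotic for $\Phi$ combined with partial summation against $P(t)\sim ct/\log t$ for (i)--(ii), with the key computation (the integral evaluating to $\omega(u)-1/u$ via $(s\omega(s))'=\omega(s-1)$) matching the paper's. The one quibble is your account of the boundary term $-cy/\log y$: it does not arise from the aggregated $-z_d/\log z_d$ corrections (those sum to an admissible error $\ll x\log\log 2x/(\log x)^2$), but is simply the contribution $-P(y-0)$ in the regime $y>\sqrt{x}$ where it is significant, and is itself absorbed by the error term $O(x/(\log y)^2)$ when $y\le\sqrt{x}$.
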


\medskip

It may seem a little surprising to see Buchstab's function appear in the asymptotic formulas for $M(x,y)$ and $N(x,y)$.
The reason for this is that $M(x,y)$ and $N(x,y)$ satisfy functional equations (see Lemma \ref{lem1} below) which closely resemble 
the functional equation 
\begin{equation}\label{PFE}
\Phi(x,y) =  1 + \sum_{y<p\le x} \Phi(x/p, p-0)
\end{equation}
satisfied by
$$ \Phi(x,y) := \# \{ n\le x : P^-(n)>y \}  .$$
Here $P^-(n)$ denotes the smallest prime factor of $n$ and $P^-(1)=\infty$. The main difference is that the primes in \eqref{PFE}
are replaced by the practical numbers in Lemma \ref{lem1}, 
which explains the constant factor $c$ in Theorem \ref{thmN}. With Lemma \ref{Phi} (ii) we find 
that $M(x,y) \sim c \, \Phi(x,y)$ for $y\le (1-\varepsilon)x$ and $y\to \infty$.

Moreover, combining \eqref{P}, Theorem \ref{thmN}, Lemma \ref{Phi} and the prime number theorem, we have
$$ \frac{P(x)}{M(x,y)}\sim \frac{\pi(x)}{\Phi(x,y)} \sim \frac{1}{u\omega(u)} \qquad (y\to \infty,\ x/y \to \infty). $$
Hence the probability that a random integer $n\le x$ is practical, given that $g(n)\ge y$, is asymptotically equivalent to
the probability that a random integer $n\le x$ is prime, given that $P^-(n)>y$, as $y\to \infty$, $x/y \to \infty$.

The rapid convergence of $\omega(u)$ to $e^{-\gamma}$ (see Lemma \ref{omega} (ii)) and Theorem \ref{thmN} imply that, for $x\ge y\ge 2$,
\begin{equation}\label{MN}
M(x,y) ,N(x,y) = \frac{c e^{-\gamma} x}{\log y}
\left(1+O\left(\frac{1}{\Gamma(u+1)}+\frac{\log \log 2y}{\log y}\right)\right),
\end{equation}
where $\Gamma$ denotes the usual gamma function. Combining \eqref{MN} with (iii) and (iv) gives the estimate
$$ \mu_y, \nu_y=\frac{c e^{-\gamma} }{\log y} \left(1+O\left(\frac{\log \log y}{\log y}\right)\right) .$$
The following table shows  $\mu_y=\lim\limits_{x\to \infty} M(x,y)/x$ and $\nu_y=\lim\limits_{x\to \infty} N(x,y)/x$ for small values of $y$:

\medskip
\begin{center}
  \begin{tabular}{ |c | c | } 
    \hline
    $y \in $ & $\mu_y$ \\ \hline 
    $[0,1]$ & $1$  \\ \hline
    $(1,2]$ &  $1/2$ \\ \hline
    $(2,4]$ &  $1/3$   \\ \hline
    $(4,6]$ &  $29/105$   \\ \hline
  \end{tabular}
  \qquad
 \begin{tabular}{ |c | c | } 
    \hline
    $y \in $ & $\nu_y$ \\ \hline 
    $[0,1]$ & $1$  \\ \hline
    $(1,3]$ &  $1/2$ \\ \hline
    $(3,7]$ &  $1/3$   \\ \hline
    $(7,12]$ &  $29/105$   \\ \hline
  \end{tabular}
\end{center}

\medskip

From part (iii) of Theorem \ref{thmN} we obtain the natural density of integers whose practical component is equal to $m$.
\begin{corollary}\label{corM} Let $m\ge 1$ and
$$ \alpha_m := \mu_m-\mu_{m^+}= \frac{\chi(m)}{m} \prod_{p\le \sigma(m)+1} \left(1-\frac{1}{p}\right). $$
For $x\ge 1$ we have
$ \#\bigl\{n\le x : g(n)=m\bigr\} = x \alpha_m + O(2^m).$
\end{corollary}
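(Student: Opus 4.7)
The plan is to write $\#\{n\le x: g(n)=m\}$ as a telescoping difference and apply Theorem \ref{thmN}(iii) twice. Because $g(n)$ takes only positive integer values, the condition $g(n)=m$ is equivalent to $g(n)\ge m$ together with $g(n)<m+1$, so
$$ \#\{n\le x : g(n)=m\} = M(x,m) - M(x,m+1). $$
Invoking part (iii) of Theorem \ref{thmN} on each term yields $M(x,m)=x\mu_m+O(2^m)$ and $M(x,m+1)=x\mu_{m+1}+O(2^{m+1})$, and since $2^{m+1}\ll 2^m$ the errors combine to $O(2^m)$, giving $x(\mu_m-\mu_{m+1})+O(2^m)$.

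It then remains to verify that $\mu_m-\mu_{m+1}=\alpha_m$. Unpacking the definition of $\mu_y$, the two sums differ only by the terms with $m\le n<m+1$, which for integer $n$ consists of the single value $n=m$. Hence
$$ \mu_m-\mu_{m+1}=\sum_{m\le n<m+1}\frac{\chi(n)}{n}\prod_{p\le \sigma(n)+1}\left(1-\frac{1}{p}\right)=\frac{\chi(m)}{m}\prod_{p\le \sigma(m)+1}\left(1-\frac{1}{p}\right), $$
which is precisely $\alpha_m$. This also confirms that $\mu_{m^+}=\mu_{m+1}$, so the notation in the statement of the corollary is consistent with the identity $\alpha_m=\mu_m-\mu_{m+1}$ that we are using.

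There is no real obstacle: the corollary is an essentially immediate consequence of Theorem \ref{thmN}(iii). The only points requiring any attention are the trivial absorption of the error $O(2^{m+1})$ into $O(2^m)$ and the observation that the integer-valuedness of $g$ (and of the summation index $n$) makes the relevant left- and right-limits of $\mu_y$ at integer points collapse to a single contribution.
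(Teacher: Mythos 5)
Your proof is correct and follows exactly the route the paper intends: Corollary \ref{corM} is stated as an immediate consequence of Theorem \ref{thmN}(iii) via the telescoping identity $\#\{n\le x: g(n)=m\}=M(x,m)-M(x,m+1)$ and the observation that $\mu_m-\mu_{m+1}$ isolates the single summand $n=m$ in the definition of $\mu_y$. The verification that $\mu_{m^+}=\mu_{m+1}$ and the absorption of $O(2^{m+1})$ into $O(2^m)$ are handled correctly.
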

\medskip

Pollack and Thompson \cite[Corollary 1.2]{PT} found that the set of integers $n$ with $f(n)=m$ has a natural density $\rho_m$.
Part (iv) of Theorem \ref{thmN} implies

\begin{corollary}\label{corN} Let $m\ge 1$ and
$$ \rho_m:= \nu_m-\nu_{m^+}=\sum_{\sigma(n)=m}  \frac{\chi(n)}{n} \prod_{p\le \sigma(n)+1} \left(1-\frac{1}{p}\right)
=\sum_{\sigma(n)=m} \alpha_n . $$
For $x\ge 1$ we have
$  \#\bigl\{n\le x : f(n)=m\bigr\} = x \rho_m + O(2^m) .$
\end{corollary}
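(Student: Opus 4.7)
The plan is to express $\#\{n\le x:f(n)=m\}$ as a telescoping difference of two $N$-counts and then read off $\rho_m$ directly from the definition of $\nu_y$. Since $f(n)$ takes only positive integer values, we have
\[
\#\{n\le x:f(n)=m\} \;=\; N(x,m) - N(x,m+1).
\]
Applying part (iv) of Theorem \ref{thmN} to each term, and absorbing $O(2^{m+1})$ into $O(2^m)$, yields
\[
\#\{n\le x:f(n)=m\} \;=\; x\bigl(\nu_m - \nu_{m+1}\bigr) + O(2^m).
\]

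The next step is to simplify $\nu_m - \nu_{m+1}$ using the definition of $\nu_y$. The difference of the two defining sums is
\[
\nu_m-\nu_{m+1} \;=\; \sum_{\sigma(n)<m+1}\frac{\chi(n)}{n}\prod_{p\le \sigma(n)+1}\left(1-\frac{1}{p}\right) \;-\; \sum_{\sigma(n)<m}\frac{\chi(n)}{n}\prod_{p\le \sigma(n)+1}\left(1-\frac{1}{p}\right).
\]
Since $\sigma(n)$ is always a positive integer, the symmetric difference of the index sets equals $\{n:\sigma(n)=m\}$, so this collapses to $\sum_{\sigma(n)=m}\alpha_n$, which in turn equals $\rho_m$ in the middle expression of the corollary. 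The same integrality of $\sigma(n)$ makes $y\mapsto \nu_y$ constant on the interval $(m,m+1]$, so $\nu_{m^+}=\nu_{m+1}$, identifying $\rho_m$ with $\nu_m-\nu_{m^+}$ as well. Combining this with the displayed asymptotic gives the stated conclusion.

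I do not anticipate a real obstacle here: the corollary is essentially a telescoping consequence of part (iv) together with the observation that $\nu_y$ is a step function with jumps supported on the integers. The only point requiring a little care is interpreting the one-sided limit $\nu_{m^+}$, but the integrality of $\sigma(n)$ settles this immediately.
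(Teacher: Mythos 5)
Your proposal is correct and follows exactly the route the paper intends: the paper gives no separate proof, simply noting that part (iv) of Theorem \ref{thmN} implies the corollary via the telescoping identity $\#\{n\le x: f(n)=m\}=N(x,m)-N(x,m+1)$ and the integrality of $\sigma(n)$, which is precisely your argument. Your handling of $\nu_{m^+}=\nu_{m+1}$ and the absorption of $O(2^{m+1})$ into $O(2^m)$ is exactly the intended bookkeeping.
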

\medskip

The following table shows non-zero values of $\alpha_m$ and $\rho_m$ for small $m$. 
Note that $\alpha_m >0$ if and only if $m$ is practical, while $\rho_m > 0$ if and only if $m=\sigma(n)$ for some practical number $n$.

\medskip
\begin{center}
  \begin{tabular}{ |c | c | } 
   \hline
  $m  $ & $\alpha_m$ \\ \hline 
    $1$ & $1/2$  \\ \hline
    $2$ &  $1/6$ \\ \hline
    $4$ &  $2/35$   \\ \hline
    $6$ &  $32/1001$   \\ \hline
  \end{tabular}
  \qquad
  \begin{tabular}{ |c | c | } 
    \hline
    $m  $ & $\rho_m$ \\ \hline 
    $1$ & $1/2$  \\ \hline
    $3$ &  $1/6$ \\ \hline
    $7$ &  $2/35$   \\ \hline
    $12$ &  $32/1001$   \\ \hline
  \end{tabular}
\end{center}
\medskip

The equality of $\alpha_m$ and $\rho_{\sigma(m)}$ does not always hold. For example, since $\sigma(54)=\sigma(56)=120$ and both $54$ and $56$ are practical, we have $\rho_{120}=\alpha_{54}+\alpha_{56}$.  Moreover, Pollack and Thompson \cite[Theorem 1.3]{PT} show that the number of integers $m\le x$ for which $\rho_m>0$ is $\ll \frac{x}{(\log x)^A}$ for every fixed $A>0$. 
Thus the support of $\rho_m$ is a much thinner set than the support of $\alpha_m$, the set of practical numbers.

The reader may have noticed that practical integers $n<y$ are not counted in $M(x,y)$. 
This suggests that we may want to consider replacing the parameter $y$ by an increasing function of $n$, so that smaller values of $n$ are not ignored. To this end, we define
$$ M_\lambda(x):= \#\{n\le x: g(n) \ge n^\lambda \}, \quad  N_\lambda(x):= \#\{n\le x: f(n) \ge n^\lambda \}.$$
Nevertheless, the following result shows that, for $x^\lambda \to \infty$,  $x^{1-\lambda} \to \infty$,
$$M_\lambda(x)\sim M(x,x^\lambda) \sim N_\lambda(x)\sim N(x,x^\lambda) \sim \frac{cx\omega(1/\lambda)}{\log(x^\lambda)}.$$

\begin{corollary}\label{pro}
For $x\ge  y \ge 2$  we have
\begin{enumerate}
\item[(i)]
$ \displaystyle M_{1/u}(x) =\frac{c x \omega(u)}{\log y}+O\left(\frac{x\log \log 2y}{(\log y)^2}\right),$
\item[]
\item[(ii)]
$ \displaystyle N_{1/u}(x) =\frac{c x \omega(u)}{\log y}+O\left(\frac{y}{\log y} + \frac{x\log\log 2y}{(\log y)^2}\right).$
\end{enumerate}
\end{corollary}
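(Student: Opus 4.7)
The plan is to compare $M_{1/u}(x)$ with $M(x,y)$ and $N_{1/u}(x)$ with $N(x,y)$, taking advantage of the relation $y = x^{1/u}$, and then invoke Theorem \ref{thmN}. The key tool is the unique factorization $n = g(n) \cdot m$ with $\gcd(g(n), m) = 1$ and $P^-(m) > \sigma(g(n)) + 1$ (the latter follows from the maximality of $g(n)$, since otherwise $g(n) \cdot p$ would be a larger practical divisor of $n$). Consequently $\#\{n \le z : g(n) = g\} = \Phi(z/g, \sigma(g)+1)$ for each practical $g$. The condition $g(n) \ge n^{1/u}$ translates to $m \le g^{u-1}$; since $g \ge y = x^{1/u}$ is equivalent to $g^{u-1} \ge x/g$, the binding constraint on $m$ is $x/g$ when $g \ge y$ and $g^{u-1}$ when $g < y$. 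This yields
\[
M_{1/u}(x) - M(x,y) = \sum_{\substack{g \text{ practical} \\ g < y}} \Phi(g^{u-1}, \sigma(g)+1),
\]
and analogously $N_{1/u}(x) - N(x,y) = \sum_{g \text{ practical},\, \sigma(g) < y} \Phi(\sigma(g)^u/g, \sigma(g)+1)$.

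Next I would split each $\Phi(\cdot,\sigma(g)+1) = 1 + (\Phi-1)$, isolating the $m=1$ term. For part (i), the ``$1$'' contribution is $\#\{g \text{ practical} : g < y\} = P(y^-) = \frac{cy}{\log y} + O\bigl(\frac{y\log\log y}{(\log y)^2}\bigr)$ by \eqref{P}. This is precisely the shift needed to upgrade the main term $c(x\omega(u)-y)/\log y$ of $M(x,y)$ from Theorem \ref{thmN}(i) into $cx\omega(u)/\log y$. The remaining $(\Phi-1)$ contribution is controlled by the sieve estimate $\Phi(z,w)-1 \ll z/\log w$ combined with $\sigma(g)\ge g$, giving
\[
\sum_{\substack{g \text{ practical} \\ g<y}} \bigl(\Phi(g^{u-1},\sigma(g)+1)-1\bigr) \ll \sum_{\substack{g \text{ practical}\\ g<y}} \frac{g^{u-1}}{\log g} \ll \int_2^y \frac{t^{u-1}}{(\log t)^2}\, dt \ll \frac{x}{u(\log y)^2} = \frac{x}{\log x\,\log y},
\]
the last step being partial summation with $dP(t) \sim c\,dt/\log t$. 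This is absorbed into $O(x\log\log 2y/(\log y)^2)$ for $y$ large (since $\log y \le \log x \cdot \log\log 2y$ then) and handled trivially in the bounded-$y$ range, establishing (i).

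For part (ii), the analogous ``$1$'' contribution is $\#\{g \text{ practical} : \sigma(g) < y\} \le P(y^-) = O(y/\log y)$, which is exactly the source of the extra $y/\log y$ allowed in the error. The remaining sum is bounded by $\sum_{g \text{ practical},\,\sigma(g)<y} \sigma(g)^u/(g\log\sigma(g))$. Reindexing by $v = \sigma(g)$ and using the identity $\sum_{\sigma(g)=v,\,g \text{ practical}} \chi(g)/g = \rho_v / \prod_{p\le v+1}(1-1/p)$ from Corollary \ref{corN}, this reduces (up to a factor $\ll \log v$ coming from Mertens) to the moment $\sum_{v<y} v^u \rho_v$. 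Partial summation using $\nu_v \sim ce^{-\gamma}/\log v$ shows that the leading terms cancel (since both $y^u\nu_y$ and $u\int v^{u-1}\nu_v\,dv$ are asymptotic to $ce^{-\gamma}x/\log y$), leaving a remainder of order $x\log\log 2y/(\log y)^2$, which completes the proof.

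The main obstacle is the partial-summation bound in part (ii): the factor $\sigma(g)/g$ can be as large as $e^\gamma\log\log g$, so crude bounds on $\sigma(g)$ lose too much. The cleaner route through $\rho_v$ exploits the sparseness of the support of $\rho_v$ from \cite{PT} and the cancellation structure built into $\nu_y$, and the weaker error tolerance in (ii) (the extra $y/\log y$) provides exactly the slack needed.
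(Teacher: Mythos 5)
Your decomposition is exactly the paper's: the factorization $n=g\cdot m$ with $P^-(m)>\sigma(g)+1$ and the identification of the binding constraint $\min(x/g,g^{u-1})$, resp.\ $\min(x/g,\sigma(g)^u/g)$, reproduce Lemma \ref{lem1}(iv),(v), and your part (i) — extract $P(y^-)$ to upgrade $c(x\omega(u)-y)/\log y$ to $cx\omega(u)/\log y$, then bound the tail via $\Phi(z,w)-1\ll z/\log w$ and partial summation — is essentially the paper's proof. (One small caveat there: the step $\sum_{g<y}\chi(g)g^{u-1}/\log g\ll\int_2^y t^{u-1}(\log t)^{-2}\,dt$ is the heuristic Stieltjes substitution; rigorous partial summation produces a boundary term $P(y)y^{u-1}/\log y\asymp x/(\log y)^2$, so the safe conclusion is $O(x/(\log y)^2)$ rather than $x/(u(\log y)^2)$ — either is absorbed by the stated error, and the paper claims only the former.) Where you genuinely diverge is the key sum in (ii), $\sum_{\sigma(n)<y}\chi(n)\sigma(n)^u/(n\log 2n)$. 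The paper treats it elementarily: split at $n=y/(A(\log y)^3)$; for large $n$ use $\sigma(n)^u\le y^u=x$ and sum $x/(n(\log 2n)^2)$ over the short range to get $x\log\log y/(\log y)^2$; for small $n$ use $\sigma(n)\le y(\log y)^{-2}$, whence $\sigma(n)^u\le x(\log y)^{-2u}\le x(\log y)^{-2}$. Your route — reindex by $v=\sigma(n)$, pass via Corollary \ref{corN} and Mertens to $\sum_{v<y}v^u\rho_v=1-y^u\nu_y+u\int_1^y t^{u-1}\nu_t\,dt$, and exploit the cancellation of the two terms of size $ce^{-\gamma}x/\log y$ — is also valid (there is no circularity, since $\rho_v$ and the asymptotic for $\nu_t$ rest only on Theorem \ref{thmN}, and the range $t\le\sqrt y$ where that asymptotic fails contributes only $O(\sqrt x\,u)$), but it is considerably heavier: it needs the second-order error term for $\nu_t$ and careful handling of the Mertens correction and the small-$t$ range, where the paper needs only the trivial inequality $\sigma(n)<y$ on two subranges. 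The paper's split buys simplicity and self-containment; your version makes transparent that the surviving error is precisely the $\log\log$ fluctuation in $\nu_t$.
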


\bigskip

\section{Proofs}

Stewart \cite{Stew} and Sierpinski \cite{Sier} independently discovered the following characterization of practical numbers. 
An integer $n \ge 2$ with prime factorization $n=p_1^{\alpha_1} \cdots p_k^{\alpha_k}$, $p_1<p_2<\ldots < p_k$,
is practical if and only if 
$$p_j\le 1+\sigma\bigg( \prod_{1\le i \le j-1} p_i^{\alpha_i} \bigg) \qquad (1\le j \le k).$$
It follows that the practical component of $n$ is the largest practical divisor of $n$ of the form
$ \displaystyle \prod_{1\le i \le j} p_i^{\alpha_i}$.
 If $j<k$, i.e. $n$ is not practical, then we have $ \displaystyle p_{j+1}>1+\sigma\biggl( \prod_{1\le i \le j} p_i^{\alpha_i} \biggr)$.

\begin{lemma}\label{lem1}
For $x\ge 1$, $y\ge 1$ we have 
\begin{enumerate}[(i)]
\item $\displaystyle [x]  = \sum_{n\le x} \chi(n) \Phi\bigl(x/n, \sigma(n)+1\bigr)$
\item $\displaystyle M(x,y)  = \sum_{y\le  n\le x} \chi(n) \Phi\bigl(x/n, \sigma(n)+1\bigr) $
\item $\displaystyle N(x,y)  = \sum_{\substack{n\le x \\ \sigma(n)\ge y}} \chi(n) \Phi\bigl(x/n, \sigma(n)+1\bigr) $
\item $\displaystyle M_\lambda(x) =  \sum_{n\le x} \chi(n) \Phi\left(\min\left(x/n,n^{1/\lambda -1}\right), \sigma(n)+1\right) $
\item $\displaystyle N_\lambda(x) =  \sum_{n\le x} \chi(n) \Phi\left(\min\left(x/n,\sigma(n)^{1/\lambda }/n\right), \sigma(n)+1\right) $
\end{enumerate}
\end{lemma}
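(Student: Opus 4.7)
The plan is to use the Stewart--Sierpinski characterization recalled just before the lemma to set up a canonical factorization $n = m r$, where $m = g(n)$ is the practical component and $r = n/m$ consists of the primes that appear after the "practical prefix" of $n$. By that characterization, $r$ has the property that every prime factor of $r$ exceeds $1+\sigma(m)$, equivalently $P^-(r) > \sigma(m)+1$; and conversely, given any practical $m$ and any $r$ with $P^-(r) > \sigma(m)+1$, the product $n=mr$ is an integer whose practical component is exactly $m$. Thus the map $n \mapsto (m,r) = (g(n), n/g(n))$ is a bijection between $\mathbb{N}$ and pairs $(m,r)$ with $\chi(m)=1$ and $P^-(r)>\sigma(m)+1$.

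Once this bijection is in hand, each of the five identities follows by reorganizing the count according to the value of $m=g(n)$. For (i), summing over all $n\le x$ with no restriction and grouping by $m$ gives
$$[x] = \sum_{m} \chi(m) \#\{ r \le x/m : P^-(r) > \sigma(m)+1 \} = \sum_{m\le x} \chi(m)\,\Phi(x/m,\sigma(m)+1).$$
For (ii), the condition $g(n)\ge y$ is just $m\ge y$, so only the range of the outer sum changes. For (iii), since Pollack and Thompson's identity $f(n)=\sigma(g(n))=\sigma(m)$ is cited in the introduction, the condition $f(n)\ge y$ becomes $\sigma(m)\ge y$, and again only the outer range changes.

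Parts (iv) and (v) require translating the inequalities $g(n)\ge n^\lambda$ and $f(n)\ge n^\lambda$ into upper bounds on $r$. Writing $n=mr$, the condition $m \ge (mr)^\lambda$ is equivalent to $r \le m^{1/\lambda - 1}$, which combined with the trivial constraint $r\le x/m$ produces the upper bound $\min(x/m, m^{1/\lambda -1})$ inside $\Phi$ in (iv). Similarly, $\sigma(m)\ge (mr)^\lambda$ rearranges to $r \le \sigma(m)^{1/\lambda}/m$, giving the upper bound in (v). In both cases the inner count is once more exactly a value of $\Phi$.

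No step here is genuinely hard; the only things to watch are (a) that the factorization really is unique and really is characterized by $P^-(r)>\sigma(m)+1$ (which is the content of the Stewart--Sierpinski statement as restated in the paper, together with the observation that $n/g(n)$ is allowed to equal $1$, handled by the convention $P^-(1)=\infty$), and (b) the algebra converting $g(n)\ge n^\lambda$ and $f(n)\ge n^\lambda$ into the correct bounds on $r$. The bookkeeping in (iv) and (v), particularly keeping track of the min between the "size" constraint $r\le x/m$ and the "density" constraint coming from $\lambda$, is the most error-prone part but is routine.
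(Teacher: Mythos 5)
Your proposal is correct and follows exactly the paper's own argument: the paper likewise factors each counted integer uniquely as $n r$ with $n$ the practical component and $P^-(r)>\sigma(n)+1$, then counts the admissible $r$ with $\Phi$, leaving parts (iii)--(v) as routine variants. Your added details (the converse direction of the bijection via Stewart--Sierpinski, and the algebra turning $g(n)\ge n^\lambda$ and $f(n)\ge n^\lambda$ into the $\min$ bounds) are accurate and simply flesh out what the paper leaves implicit.
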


\begin{proof}
Each of these equations is based on the same principle, which is to count the integers $m$ contributing to the left-hand side according to their practical component $n$. Part (i) is Lemma 2.3 of \cite{PDD}. We only take a closer look at (ii). Every integer $m$ counted in $M(x,y)$ factors uniquely as $m=nr$, where $n$ is the practical
component of $m$, $n\ge y$ and $P^-(r)>\sigma(n)+1$. Given a practical component $n$, the number of admissible values of $r$ is given by $\Phi\bigl(x/n, \sigma(n)+1\bigr) $.
\end{proof}

\begin{lemma}\label{Phi}
We have
\begin{enumerate}
\item[(i)]
$\displaystyle \Phi(x,y)= x \prod_{p\le y} \left(1-\frac{1}{p}\right) + O\left( 2^{\pi(y)}\right)\quad (x\ge 1, y\ge 2)$
\item[]
\item[(ii)] $\displaystyle \Phi(x,y) = \frac{x \omega(u) -y}{\log y} + O\left(\frac{x}{(\log y)^2}\right) \quad (x\ge y\ge 2)$
\item[]
\item[(iii)] $\displaystyle \Phi(x,y) = \frac{x \omega(u)}{\log y} + O\left(\frac{y}{\log y}+\frac{x}{(\log y)^2}\right) \quad (x\ge 1,  y\ge 2)$
\item[]
\item[(iv)] $\displaystyle \Phi(x,y)-1\ll \frac{x}{\log y}\quad (x\ge 1, y\ge 2)$
\end{enumerate}
\end{lemma}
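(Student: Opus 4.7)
Part (i) is immediate from Legendre's sieve identity: with $P(y) = \prod_{p \le y} p$, inclusion-exclusion gives $\Phi(x,y) = \sum_{d \mid P(y)} \mu(d)\lfloor x/d\rfloor$, and writing $\lfloor x/d\rfloor = x/d + O(1)$ produces the Euler product main term, with error bounded by the number of squarefree divisors of $P(y)$, namely $2^{\pi(y)}$.

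For part (ii), I would argue by induction on $u = \log x / \log y$ using Buchstab's functional equation \eqref{PFE}. In the base range $1 \le u \le 2$, the integers $n \le x$ with $P^-(n) > y$ are $n = 1$ together with the primes in $(y,x]$, so $\Phi(x,y) = 1 + \pi(x) - \pi(y)$. The prime number theorem with a classical error term, combined with the identity $x\omega(u)/\log y = x/\log x$ that follows from $\omega(u) = 1/u$ on $[1,2]$, rearranges this into the claimed form $(x\omega(u)-y)/\log y + O(x/(\log y)^2)$; the stray ``$+1$'' is absorbed because $y \ge (\log y)^2$ for $y \ge 2$. For the inductive step with $u > 2$, substitute the inductive approximation into each summand of $\Phi(x,y) = 1 + \sum_{y < p \le x} \Phi(x/p,p-0)$, then apply Abel summation together with PNT to convert the prime sum into an integral over $t \in [y,x]$. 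The defining relation $(u\omega(u))' = \omega(u-1)$ is exactly what is needed to reassemble the integral back into the main term $x\omega(u)/\log y$, while the lower endpoint $t=y$ contributes the $-y/\log y$ boundary term.

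Parts (iii) and (iv) follow quickly. For (iii): when $x \ge y$, apply (ii) and absorb $-y/\log y$ into the $O(y/\log y)$ error; when $x < y$, $\Phi(x,y) = 1$, which is $\ll y/\log y$ since $y \ge 2$ (with the convention that the main term is treated as $0$ for $u<1$). For (iv): if $x < y$ then $\Phi(x,y) - 1 = 0$ trivially; if $x \ge y$ then (iii) combined with $y \le x$ gives $\Phi(x,y) - 1 \ll x/\log y$.

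The main technical hurdle is the careful bookkeeping of error terms in the inductive step of (ii): one must verify uniformly in $u$ that the PNT error, once propagated through Abel summation and integration, still fits inside the $O(x/(\log y)^2)$ target. Nevertheless, (ii)--(iv) are part of the classical analytic theory of the Buchstab function (compare the treatment in Tenenbaum's textbook or the work of Saias cited as \cite{Sai}), so in practice one can cite them rather than redo the induction from scratch.
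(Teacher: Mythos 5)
Your proposal is correct and matches the paper's treatment: the paper simply cites de Bruijn for (i) (whose proof is exactly the Legendre sieve computation you give) and Tenenbaum's Theorem III.6.3 for (ii) (which is proved there by the Buchstab-identity induction you sketch), and then derives (iii) and (iv) from (ii) just as you do. Your handling of the boundary cases ($x<y$, absorbing the $+1$ and the $-y/\log y$ term) is sound, so no gap.
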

\begin{proof}
Part (i) is elementary (see e.g. de Bruijn \cite{Bru}). For (ii) see Tenenbaum \cite[Theorem III.6.3]{Ten}. Parts (iii) and (iv) follow easily from (ii).
\end{proof}

\begin{lemma}\label{omega}
We have
\begin{enumerate}[(i)]
\item  $\displaystyle |\omega'(u)|\le 1/\Gamma(u+1) \quad (u\ge 1)$
\item[]
\item $\displaystyle |\omega(u)-e^{-\gamma}| \ll 1/\Gamma(u+1) \quad (u\ge 1)$
\end{enumerate}
\end{lemma}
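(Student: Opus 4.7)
My plan is to prove (i) by induction on unit intervals and then deduce (ii) by a short tail estimate.

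For (i), the base case is $u \in [1,2]$, where $\omega(u) = 1/u$ gives $|\omega'(u)| = 1/u^2$; I would verify $\Gamma(u+1) \le u^2$ on this interval by writing $\Gamma(u+1) = u\,\Gamma(u) \le u \le u^2$, using $\Gamma(u) \le 1$ on $[1,2]$ (a consequence of log-convexity of $\Gamma$ together with $\Gamma(1)=\Gamma(2)=1$). The jump of $\omega'$ at $u=2$ is handled by the functional equation itself: $\omega'(2^+) = (\omega(1)-\omega(2))/2 = 1/4 \le 1/2 = 1/\Gamma(3)$. For $u>2$, differentiate the defining relation $(u\omega(u))' = \omega(u-1)$ to obtain
\[
 u\,\omega'(u) \;=\; \omega(u-1) - \omega(u) \;=\; -\int_{u-1}^{u} \omega'(s)\, ds,
\]
so that $|\omega'(u)| \le \frac{1}{u}\int_{u-1}^{u} |\omega'(s)|\, ds$. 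Assuming the bound inductively on $[1,u]$ and using that $1/\Gamma(s+1)$ is decreasing on $[1,\infty)$ (since $\Gamma$ is increasing on $[2,\infty)$), the maximum of the integrand on $[u-1,u]$ is $1/\Gamma(u)$, yielding $|\omega'(u)| \le 1/(u\,\Gamma(u)) = 1/\Gamma(u+1)$.

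For (ii), since $\omega(u) \to e^{-\gamma}$ as $u\to\infty$ (a standard fact from Tenenbaum \cite[III.6]{Ten}), I would write
\[
 \omega(u) - e^{-\gamma} \;=\; -\int_u^\infty \omega'(s)\, ds,
\]
and plug in (i) to get $|\omega(u)-e^{-\gamma}| \le \int_u^\infty ds/\Gamma(s+1)$. Log-convexity of $\log\Gamma$ gives $\Gamma(s+1) \ge (u+1)^{s-u}\Gamma(u+1) \ge 2^{s-u}\Gamma(u+1)$ for $s \ge u \ge 1$, so the tail integral is at most $1/(\Gamma(u+1)\log 2)$, which is the required $O(1/\Gamma(u+1))$.

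The main obstacle is a subtle circularity in the inductive step of (i): the integral on $[u-1,u]$ reaches the point $u$ we are trying to bound, so a naive induction on intervals $[n,n+1]$ leaves a gap when $[u-1,u]$ straddles an integer. I would close this with a continuous-induction argument, letting $u^* := \sup\{v \ge 2 : |\omega'(s)| \le 1/\Gamma(s+1)\ \forall s\in[1,v]\}$: if $u^*<\infty$, continuity of $\omega'$ on $(2,\infty)$ would force equality $|\omega'(u^*)| = 1/\Gamma(u^*+1)$, but the \emph{strict} inequality $|\omega'(s)| < 1/\Gamma(s+1)$ on $(1,2)$ (since $\Gamma(u+1) = u\,\Gamma(u) < u^2$ strictly for $u \in (1,2]$) keeps the integral bound strict, giving the needed contradiction.
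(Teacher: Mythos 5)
The paper gives no proof of this lemma beyond a citation to Tenenbaum's book (Theorems III.5.5 and III.6.4), and your argument is in essence the standard textbook one: the identity $u\omega'(u)=\omega(u-1)-\omega(u)=-\int_{u-1}^{u}\omega'(s)\,ds$, a continuous induction for (i), and integration of (i) over $[u,\infty)$ for (ii). The skeleton is right, but one step fails as written. In the tail estimate for (ii), the inequality $\Gamma(s+1)\ge (u+1)^{s-u}\Gamma(u+1)$ is \emph{false} for $s\in(u,u+1)$: the comparison function $L(s)=\log\Gamma(u+1)+(s-u)\log(u+1)$ is exactly the chord of the convex function $s\mapsto\log\Gamma(s+1)$ over $[u,u+1]$ (since $L(u+1)=\log\Gamma(u+2)$), so log-convexity gives $\Gamma(s+1)\le(u+1)^{s-u}\Gamma(u+1)$ there --- the reverse inequality. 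Concretely, at $u=1$, $s=3/2$ one has $\Gamma(5/2)=\tfrac{3}{4}\sqrt{\pi}\approx 1.329<\sqrt{2}=2^{1/2}\,\Gamma(2)$, so even the weakened form with base $2$ fails in the stated range $u\ge 1$. The repair is routine: split the tail at integer translates and use only monotonicity of $\Gamma$ on $[2,\infty)$, namely $\int_u^\infty \frac{ds}{\Gamma(s+1)}\le\sum_{k\ge 0}\frac{1}{\Gamma(u+k+1)}\le\frac{1}{\Gamma(u+1)}\sum_{k\ge 0}2^{-k}$, because $\Gamma(u+k+1)=(u+k)\cdots(u+1)\,\Gamma(u+1)\ge 2^{k}\Gamma(u+1)$. (Your chord inequality is valid for $s\ge u+1$, so alternatively keep it there and handle the first unit interval by monotonicity alone.)

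A smaller issue sits in the continuous induction for (i). At the putative first failure point $u^*$ you need the strict inequality $\int_{u^*-1}^{u^*}\frac{ds}{\Gamma(s+1)}<\frac{1}{\Gamma(u^*)}$, and you attribute it to the strict bound $|\omega'(s)|<1/\Gamma(s+1)$ on $(1,2)$; but $(1,2)$ is disjoint from $[u^*-1,u^*]$ once $u^*\ge 3$, so your stated reason does not cover all cases. No propagation of strictness is needed: for every $u>2$ the map $s\mapsto\Gamma(s+1)$ is strictly increasing on $[u-1,u]$, which gives $\int_{u-1}^{u}\frac{ds}{\Gamma(s+1)}<\frac{1}{\Gamma(u)}$ unconditionally and hence the contradiction at $u^*$. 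With these two repairs the argument is correct and essentially self-contained, the only outside input being the value of the limit $\omega(u)\to e^{-\gamma}$ (part (i) already yields existence of the limit; only its identification as $e^{-\gamma}$ needs Tenenbaum), which is a reasonable thing to quote given that the paper itself cites the entire lemma.
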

\begin{proof}
See Tenenbaum \cite[Theorems III.5.5, III.6.4]{Ten}. 
\end{proof}

In the proof of Theorem \ref{thmN} we will use the well-known fact (see for example \cite[Theorem I.5.5]{Ten}) 
$\limsup_{n\to \infty} \sigma(n)/(n \log \log n) = e^\gamma .$

\begin{proof}[Proof of Theorem \ref{thmN}]

(i) We use Lemma \ref{lem1}(ii). If $\sqrt{x}<y\le x$, then $M(x,y)=P(x)-P(y-0)$ because $\Phi(x,y)=1$ for $y\ge x\ge 1$.
Thus the result follows from \eqref{P} in this case. If $y\le \sqrt{x}$ we have
$$M(x,y) =P(x) -P(\sqrt{x})+ \sum_{y\le  n\le \sqrt{x}} \chi(n) \Phi\bigl(x/n, \sigma(n)+1\bigr).$$
We  approximate $\Phi$ by Lemma \ref{Phi}(iii). The contribution from the error term $O(x/(\log y)^2)$ is 
$$ \sum_{y\le  n\le \sqrt{x}} \chi(n)  \frac{x/n}{(\log n)^2} \ll \frac{x}{(\log y)^2},$$
and from the error term $O(y/\log y)$ it is
$$ \sum_{y\le  n\le \sqrt{x}} \chi(n)   \frac{\sigma (n)+1}{\log (\sigma(n)+1)} \ll 
\frac{\sqrt{x} \log\log x}{\log x} \sum_{y\le  n\le \sqrt{x}} \chi(n) \ll \frac{x \log \log 2x}{(\log x)^2},$$
which is acceptable. The contribution from the main term is
\begin{equation*}
x \sum_{y\le n\le \sqrt{x}} \frac{\chi(n)}{n \log(\sigma(n)+1)}\,\omega\left( \frac{\log x/n}{\log(\sigma(n)+1)} \right) .
\end{equation*} 
In the last sum, we replace the two occurrences of $\log(\sigma(n)+1)$ by
$\log n + O(\log\log\log(8n))$. Lemma \ref{omega} and \eqref{P} show that the resulting error is
$ \ll x (\log \log 2y)/(\log y)^2.$
We thus have
$$M(x,y) =P(x)+x \sum_{y\le  n\le \sqrt{x}} \frac{\chi(n)}{n \log n} \,\omega\left(\frac{\log x}{\log n} -1\right)   
+O\left(\frac{x \log \log 2y}{(\log y)^2}\right).$$
 Partial summation together with the estimates in Lemma \ref{omega} and \eqref{P} yields 
 \begin{equation*}
 M(x,y) =P(x)+x \int_y^{\sqrt{x}} \frac{c}{t (\log t)^2} \,\omega\left(\frac{\log x}{\log t} -1\right) \mathrm{d}t   
+O\left(\frac{x \log \log 2y}{(\log y)^2}\right).
\end{equation*}
The term with the integral simplifies to
$$\frac{cx}{\log x} \int_2^u \omega(s-1) \, \mathrm{d}s =\frac{cx}{\log x} \left(u \omega(u)-1\right) .$$
The result now follows from \eqref{P}.

(ii) Lemma \ref{lem1} shows that
\begin{equation*}
\begin{split}
0 \le N(x,y) - M(x,y) & = \sum_{\substack{n<y \\ \sigma(n)\ge y}} \chi(n) \Phi\bigl(x/n, \sigma(n)+1\bigr)\\
& \le  \sum_{\frac{y}{A\log\log 2y}< n<y } \chi(n) \Phi\bigl(x/n, \sigma(n)+1\bigr),\\
\end{split}
\end{equation*} 
for some suitable constant $A$. Splitting the range by powers of $2$ and using the estimate \eqref{P} and Lemma \ref{Phi} (iv), the last sum is 
$$
\ll   P(y)+\sum_{\frac{y}{A\log\log 2y}< n<y } \frac{x}{n (\log n)^2} \ll \frac{y}{\log y} +\frac{x \log \log 2 y}{(\log y)^2} .
$$
Hence (ii) follows from (i). 

(iii) From Lemmas \ref{lem1} and \ref{Phi} we have
\begin{equation*}
\begin{split}
[x]-M(x,y) &= \sum_{n<y} \chi(n)  \Phi\bigl(x/n, \sigma(n)+1\bigr) \\
& = \sum_{n<y} \chi(n)  \left( \frac{x}{n} \prod_{p\le \sigma(n)+1} \left(1-\frac{1}{p}\right)
+O\left(2^{\pi(\sigma(n)+1)}\right) \right) \\
& = x(1-\mu_y) + O \left(  \sum_{n<y} 2^{\pi(\sigma(n)+1)}\right)\\
& = x(1-\mu_y) + O \left(   2^{(1+o(1)) e^\gamma y \log\log y /\log y} \right),
\end{split}
\end{equation*}
since $\sigma(n) \le (1+o(1))e^\gamma n \log \log n$  and $\pi(y) \le (1+o(1))y/\log y$.

We omit the proof of (iv), since it is almost the same as that of (iii). 
\end{proof}

\begin{proof}[Proof of Corollary \ref{pro}]
(i) From Lemma \ref{lem1} and Lemma \ref{Phi} (iv) we have, with $\lambda=1/u$,
\begin{equation*}
\begin{split}
M_\lambda(x)-M(x,x^\lambda) & = \sum_{n< x^\lambda} \chi(n) \Phi\left(n^{1/\lambda -1}, \sigma(n)+1\right) \\
&= P(y) + O\left(\sum_{n\le y} \chi(n) \frac{n^{u -1}}{\log 2n}\right) \\
&= P(y) + O\left(\frac{x}{(\log y)^2}\right),
\end{split}
\end{equation*} 
by partial summation. The result now follows from Theorem \ref{thmN} and \eqref{P}.  
The proof of (ii) follows the same idea. In the end we need an estimate for
$$  \sum_{\sigma(n)< y} \chi(n) \frac{\sigma(n)^u}{n \log 2n}  .$$
We split this sum into two parts. The contribution from large $n$ is 
$$ \le \sum_{\frac{y}{A (\log y)^3} <n < y} \chi(n) \frac{y^u}{n\log 2n} \ll \sum_{\frac{y}{A (\log y)^3} <n < y}  \frac{x}{n(\log 2n)^2}
\ll \frac{x \log \log y}{(\log y)^2},$$
where $A$ is a positive constant such that $\sigma(n)\le \frac{y}{(\log y)^2}$ whenever $n\le \frac{y}{A (\log y)^3}$ and $y\ge 2$. The contribution
from small $n$ is 
$$  \le \sum_{n\le \frac{y}{A (\log y)^3} } \chi(n) \frac{(y(\log y)^{-2})^u}{n \log 2n}
\ll \frac{x}{(\log y)^2}\sum_{n\ge 1} \frac{1}{n(\log 2n)^2} \ll  \frac{x}{(\log y)^2}.$$
\end{proof}


\begin{thebibliography}{99}

\bibitem{Bru}
N. G. de Bruijn, On the number of uncancelled elements in the sieve of Eratosthenes, Proc. Kon. Ned. Akad. Wetensch. {\bf 53}, 803--812.

\bibitem{EL}
P. Erd\H{o}s and J. H. Loxton, Some problems in partitio numerorum,
\textit{J. Austral. Math. Soc. Ser. A} \textbf{27} (1979), 319--331.

\bibitem{HS}
M. Hausman and H. Shapiro, On practical numbers, \textit{Comm. Pure Appl. Math.} \textbf{37} (1984), 705--713. 


\bibitem{Mar}
M. Margenstern, Les nombres pratiques: th\'{e}orie, observations et conjectures,
\textit{J. Number Theory} \textbf{37} (1991), 1--36.

\bibitem{PT}
P. Pollack and L. Thompson, Practical Pretenders,
\textit{Publ. Math. Debrecen} \textbf{82} (2013), 651--667.

\bibitem{Sai}
E. Saias, Entiers \`{a} diviseurs denses 1, \textit{J. Number
Theory} \textbf{62} (1997), 163--191.

\bibitem{Sier}
W. Sierpinski, Sur une propri\'et\'e des nombres naturels,
\textit{Ann. Mat. Pura Appl. (4)} \textbf{39} (1955), 69--74.

\bibitem{Sri}
A. K. Srinivasan, 
Practical numbers, \textit{Current Sci.} \textbf{17} (1948), 179--180.

\bibitem{Stew}
B. M. Stewart, Sums of distinct divisors, \textit{Amer. J. Math.} \textbf{76} (1954), 779--785. 

\bibitem{Ten86}
G. Tenenbaum, Sur un probl\`{e}me de crible et ses applications,
\textit{Ann. Sci. \'{E}cole Norm. Sup. (4)} \textbf{19} (1986),
1--30.

\bibitem{Ten}
G. Tenenbaum, Introduction to Analytic and Probabilistic Number
Theory, Cambridge Studies in Advanced Mathematics, Vol. 46,
Cambridge Univ. Press, Cambridge, 1995.

\bibitem{PDD}
A. Weingartner, Practical numbers and the distribution of divisors, Q. J. Math.  \textbf{66} (2015), 743--758.

\end{thebibliography}
\end{document}